\title{On non-rational fibers of del Pezzo fibrations over curves}
\author{Konstantin Loginov}
\thanks{Partially supported  by the Russian Academic Excellence Project '5-100', Foundation for the Advancement of Theoretical Physics and Mathematics ``BASIS'', and the Simons Foundation.}
\date{} 
\newcounter{cthm}
\newtheorem{proposition}[equation]{Proposition}
\newtheorem{thm}[equation]{Theorem}
\newtheorem{lem}[equation]{Lemma}
\theoremstyle{definition}
\newtheorem{defin}[equation]{Definition}
\newtheorem{remark}[equation]{Remark}
\newtheorem{question}[equation]{Question}
\newtheorem{construction}[equation]{Construction}
\newtheorem{computation}[equation]{Computation}
\newtheorem{exam}[equation]{Example}
\theoremstyle{exam}
\newcommand{\OOO}{\mathscr{O}}
\newcommand{\Addresses}{{
  \bigskip
  \footnotesize
  \textsc{Laboratory of Algebraic Geometry, Faculty of Mathematics \\ National Research University Higher School of Economics and \\ Independent University of Moscow }\\
  \textit{E-mail:} \texttt{kostyaloginov@gmail.com}
}}
\begin{document}
\maketitle
\begin{abstract}
We consider threefold  del Pezzo fibrations over a curve germ whose central fiber is non-rational. Under the additional assumption that the singularities of the total space are at worst ordinary double points, we apply a suitable base change and show that  there is a 1-to-1 correpspondence between such fibrations  and certain non-singular del Pezzo fibrations equipped with a cyclic group action.
\end{abstract}

\section*{Introduction}

It is classically known that a cubic del Pezzo surface can degenerate into a cone over an elliptic curve in a non-singular family. We investigate when a del Pezzo surface can degenerate into a non-rational surface in a ``reasonably good" family. By such family we mean a del Pezzo fibration in the sense of the Minimal Model Program (the MMP for short), see Definition \ref{dPfibration}. In particular, the total space of the fibration should have at worst terminal singularities. The main invariant of such fibrations is the degree $K_{X_\eta}^2$ of its general fiber. Since the general fiber is non-singular, $1\leq K_{X_\eta}^2\leq 9$. Our question is local, so we consider fibrations over curve germs. 

The motivation for the problem comes from the three-dimensional MMP. If we apply the MMP to a (non-singular) rationally connected threefold $U$ over the field of complex numbers, we obtain a variety $X$ birational to $U$ such that it admits a Mori fiber space structure. That is, there is a morphism $\pi: X \longrightarrow B$ with connected fibers, $\pi$-ample anti-canonical class $-K_X$ and $\mathrm{dim}\ B<\mathrm{dim}\ X$. If $\mathrm{dim}\ B=0$ then $X$ is a Fano variety. The rationality problem for (singular) Fano threefolds is far from complete solution, although much is known in the non-singular case, see \cite[Chapter 12]{IP-1999}. If $\mathrm{dim}\ B=2$ then $\pi$ is called a $\mathbb{Q}$-conic bundle. Its fibers are trees of rational curves. In this case the rationality problem for the fibers of $\pi$ is trivial. We work with the case $\mathrm{dim}\ B=1$ which is called a del Pezzo fibration. Its general fiber is rational. But a special fiber can be non-rational. It is easy to show that such fiber is a surface which is birationally ruled over a curve $C$ of genus $g(C)>0$.

In this paper we show that the properties of such del Pezzo fibrations that contain a non-rational fiber, for example the value of $g(C)$, depend on $K_{X_\eta}^2$ and on singularities of $X$. In Proposition~\ref{DegreeBound} we prove that if $X$ is non-singular (respectively, terminal Gorenstein) then $K_{X_\eta}^2 \leq 3$ (resp., $\leq 4$) and the non-rational fiber is a cone over an elliptic curve. This fact is rather elementary and follows from the classification of Gorenstein del Pezzo surfaces \cite{HW-1981}. As mentioned in Remark \ref{reduced}, in the terminal Gorenstein case any fiber is reduced and irreducible, and moreover, a non-rational fiber is necessarily normal. On the other hand, in the non-Gorenstein terminal case, multiple fibers are possible. However, their multiplicity is bounded by $6$ as shown in \cite{MP-2009}.  

In Theorem~\ref{smoothclassification} we use the base change construction to show that in the non-singular case such del Pezzo fibrations with a non-rational fiber are in $1$-to-$1$ correspondence with non-singular $\bbmu_n$-del Pezzo fibrations with certain properties. 

This shows that the non-rational fibers of terminal Gorenstein del Pezzo fibrations form a very restricted class. On the other hand, if we allow $X$ to have worse than terminal singularities then the non-rational fibers are not bounded, see Example \ref{degeneration}. We also give examples of terminal fibrations whose special fiber is birationally ruled over a curve $C$ of genus $g(C)=2,3,4$. It is not known whether one can achieve $g(C)>4$ in this setting, see Question \ref{Bquestion}.

Then we consider the fibrations with very mild singularities, the ordinary double points. Using the base change construction, we classify such fibrations with non-rational central fiber in terms of certain $\bbmu_n$-del Pezzo fibrations, see Theorem \ref{sing_classification}. It appears that in this case $K_{X_\eta}^2=1$ or $4$.

For other results on rationality in families see \cite{KT-2017}, \cite{T-2016}, \cite{P-2017} and references therein. For the classification of non-rational del Pezzo surfaces see \cite{HW-1981}, \cite{Fuji-1995}.

The author is grateful to Yuri Prokhorov for numerous useful discussions, to Alexander Kuznetsov, Dmitry Mineev and Constantin Shramov for their valuable suggestions, and to J\'er\'emy Blanc for posing Question~\ref{Bquestion}.

\section{Preliminaries}

We work over the field of complex numbers. We use terminology and notation of the Minimal Model Program (e.g., \cite{Matsuki2002}, \cite{KMM-1987}).

\begin{defin}
\label{dPfibration}
Let $X$ be a three-dimensional normal projective variety with at worst terminal $\mathbb{Q}$-factorial singularities and let $B$ be a non-singular curve. Then $\pi \colon X \longrightarrow B$ is called a \emph{del Pezzo fibration} (resp., a \emph{weak del Pezzo fibration}) if the following conditions hold:
\begin{enumerate} 
\item
$\pi$ is projective and has connected fibers;
\item
$-K_X$ is $\pi$-ample (resp., $\pi$-nef and $\pi$-big);
\item
$\pi$ is an extremal contraction, that is $\rho (X / B) = 1$.
\end{enumerate}

\emph{The degree of a (weak) del Pezzo fibration} is the degree of its general fiber $X_\eta$. Since $X$ is terminal $X_\eta$ is a non-singular del Pezzo surface. 
\end{defin}

We say that a del Pezzo fibration $\pi: X \longrightarrow B$ is \emph{non-singular} (resp., \emph{Gorenstein}) if so it its total space $X$. If in the above definition $X$ is an complex analytic space and $\pi$ is a proper map, we call $\pi \colon X \longrightarrow B$ an \emph{analyitc del Pezzo fibration}. When we consider $X$ as a germ over $o \in B$ we use the notation $$\pi: X\longrightarrow B \ni o.$$

Let $G$ be a group acting on the fibration $\pi$. Then one can define a \emph{$G$-del Pezzo fibration} as in Definition \ref{dPfibration} with the following modifications: we require $X$ to be $G\mathbb{Q}$-factorial (that is, every $G$-invariant Weil divisor is $\mathbb{Q}$-Cartier) and have $\rho^G (X / B) = 1$. In this paper we will work with $\bbmu_n$-del Pezzo fibrations where $\bbmu_n$ is the cyclic group of order $n$. We fix a primitive root of unity of degree $n$ and denote it by $\zeta_n$. 


\begin{remark}
\label{reduced}
Let $\pi: X \longrightarrow B\ni o$ be a Gorenstein del Pezzo fibration. Consider the fiber $F=\pi^{-1}(o)$. Since $\rho(X/B)=1$ the fiber $F$ is irreducible. Since $X$ is Gorenstein $F$ is reduced \cite[5.1]{Kaw-1988}. Assume that $F$ is non-rational. Then $F$ is normal \cite{Re-1994, AF-2003}. 
\end{remark}

\begin{proposition}
\label{DegreeBound}
Let $\pi:~X \longrightarrow~B\ni o$ be a Gorenstein del Pezzo fibration such that the fiber $F=\pi^{-1}(o)$ is non-rational. Then $F$ is a generalised cone over an elliptic curve and $K_F^2\leq 4$. Moreover, if $X$ is non-singular then $K_F^2\leq 3$.
\end{proposition}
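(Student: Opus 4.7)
The plan is to combine the Hidaka--Watanabe classification of Gorenstein del Pezzo surfaces with a local embedding-dimension bound at the vertex of the cone. First, since $F = \pi^{-1}(o)$ is a principal Cartier divisor on $X$ we have $F|_F \sim 0$, so adjunction gives $K_F = K_X|_F$; in particular $-K_F$ is ample, and together with Remark~\ref{reduced} this shows that $F$ is a normal Gorenstein del Pezzo surface. The classification in \cite{HW-1981} then says that such an $F$ is either rational with du Val singularities, or a generalised cone over an elliptic curve $E$ with a unique simple elliptic singularity at the vertex $v$. Since $F$ is non-rational we are in the second case. Resolving $v$ by contracting the negative section of $\mathbb{P}(\mathcal{O}_E \oplus L^{-1})$ (where $L$ is the polarizing line bundle of the cone) and applying adjunction shows that $K_F^2 = \deg L =: d$.

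Next, I would bound the embedding dimension of $F$ at $v$. The local ring at $v$ is the localisation at the irrelevant maximal ideal of $R = \bigoplus_{n \geq 0} H^0(E, L^n)$. For $d \geq 3$ the line bundle $L$ is very ample and $E \subset \mathbb{P}^{d-1}$ is projectively normal, so $R$ is generated in degree one and
\[
\mathrm{emdim}_v F \;=\; \dim_{\mathbb{C}}(\mathfrak{m}_v / \mathfrak{m}_v^2) \;=\; h^0(E, L) \;=\; d.
\]
For $d = 1, 2$ a standard Weierstrass-type model realises $F$ near $v$ as a hypersurface in $\mathbb{A}^3$, so $\mathrm{emdim}_v F = 3$; in all cases $\mathrm{emdim}_v F = \max(3, d)$.

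Finally, since $F$ is Cartier in $X$, locally at $v$ one has $F = \{f = 0\}$ for some $f \in \mathcal{O}_{X,v}$, and therefore $\mathrm{emdim}_v F \leq \mathrm{emdim}_v X$. If $X$ is non-singular at $v$ then $\mathrm{emdim}_v X = 3$, forcing $d \leq 3$. In general, since $X$ is terminal Gorenstein, Reid's classification says $v \in X$ is an isolated compound Du Val singularity, hence a hypersurface in $\mathbb{A}^4$; so $\mathrm{emdim}_v X \leq 4$, whence $d \leq 4$. The key technical input is the embedding-dimension computation at the vertex, which ultimately rests on projective normality of elliptic curves under complete linear systems of degree $\geq 3$; the verification that the generalised cone behaves as claimed in the low-degree cases $d = 1, 2$ (where one leaves the realm of ordinary projective cones) is the place where one must be most careful.
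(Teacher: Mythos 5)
Your proof is correct and follows essentially the same route as the paper: both reduce the statement to comparing the embedding dimension of the simple elliptic vertex of $F$ (which equals $\max(3,d)$) with that of $X$, using that a non-singular point has embedding dimension $3$ and that a terminal Gorenstein point is a cDV, hence hypersurface, singularity with embedding dimension at most $4$. The only difference is in how the key numerical input is obtained: you compute $\mathrm{emdim}_v F = \max(3,d)$ directly from the section ring of the cone (projective normality for $d\geq 3$, Weierstrass-type models for $d=1,2$), whereas the paper gets $d=-E_0^2$ from Noether's formula on the minimal resolution and then quotes \cite[4.57]{KM-1998} for the tangent space dimension.
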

\begin{proof}
The first claim follows from the classification of Gorenstein del Pezzo surfaces, see for example \cite{HW-1981}. Notice that $F$ has only one simple elliptic singularity $x_0$. Let $\phi: T \longrightarrow F$ be the minimal resolution. We have $K_T = \phi^* K_F - E_0$ where $E_0$ is an elliptic curve. Thus, $K_T^2 = K_F^2 + E_0^2 = d + E_0^2$. By the Noether formula $K_T^2 + \chi_{\mathrm{top}} (T) = 12 \chi (\OOO_T) = 0$, and $\chi_{\mathrm{top}} (T) = 0$ since $T$ is a ruled surface over an elliptic curve, so $d = -E_0^2$. On the other hand, by \cite[4.57]{KM-1998} the dimension of the tangent space at $x_0$ to $F$ is equal to $\mathrm{max} (3, -E_0^2)$. If $X$ is Gorenstein it has hypersurface singularities, hence $-E_0^2=d\leq 4$. If $X$ is non-singular, $-E_0^2=d\leq 3$, so we are done.
\end{proof}

The next example shows that the case $d=4$ occurs.

\begin{exam}
Let $X$ be given by the equations 
\begin{align*}
x_1^2 + x_2^2 + x_3^2 + x_4^2 + tx_5^2 &= 0, \\
a_1 x_1^2 + a_2 x_2^2 + a_3 x_3^2 + a_4 x_4^2 + t x_5^2 &= 0 
\end{align*}
in $\mathbb{P}^4\times \mathbb{A}^1_t$ where $a_i \in \mathbb{C}$. One checks that for a general choice of $a_i$ the threefold $X$ has one $cA_1$ singularity, and the fiber $F$ over $0\in \mathbb{A}^1_t$ is a cone over an elliptic curve.
\end{exam}

There are examples of non-Gorenstein fibrations with a non-rational fiber that is birationally ruled over the curve $C$ with $g(C)>1$. 

\begin{exam}
\begin{enumerate}
\item
$ X = ( f_6(x, y, w) + t z^3 = 0 ) \subset \mathbb{P}(1,1,2,3)\times \mathbb{A}^1_t$
where $(x,y,z,w)$ have the weights $(1,1,2,3)$, the polynomial $f_6$ has degree $6$ and is general. The morphism $\pi: X \longrightarrow B = \mathbb{A}^1_t$ is induced by the projection to the second factor. Notice that $X$ has one terminal singularity of type $\frac{1}{2}(1,1,1)$. A general fiber is a degree $1$ del Pezzo surface. The central fiber $F$ is a cone over a hyperelliptic curve $C$ of genus $2$. 
\
\item
$ X = ( f_4(x, y, z) + t w^2 = 0 ) \subset \mathbb{P}(1,1,1,2)\times \mathbb{A}^1_t$
where $(x,y,z,w)$ have the weights $(1,1,1,2)$. Notice that $X$ has one terminal singularity of type $\frac{1}{2}(1,1,1)$. A general fiber is a degree $2$ del Pezzo surface. The central fiber $F=\pi^{-1}(0)$ is a cone over a plane quartic curve $C$, so $g(C)=3$.
\
\item
$ X = ( f_6(x, y, z) + t w^2 = 0 ) \subset \mathbb{P}(1,1,2,3)\times \mathbb{A}^1_t$
where $(x,y,z,w)$ have the weights $(1,1,2,3)$. Notice that $X$ has one terminal singularity of type $\frac{1}{3}(1,1,2)$. A general fiber is a degree $1$ del Pezzo surface. The central fiber $F$ is a cone over a trigonal curve $C$ of genus $4$.
\end{enumerate}
\end{exam}

In the above examples $F$ is normal. However, if we take a special polynomial $f_i$ we can get a non-normal and non-rational fiber. This contrasts with the Gorenstein case. The following natural question was posed by J. Blanc: 

\begin{question}
\label{Bquestion}
Is there a del Pezzo fibration $\pi: X \longrightarrow B$ such that its fiber is birationally ruled over a curve $C$ with $g(C) > 4$?
\end{question}

At the moment, the answer to this question is not known. Terminal singularities is an important restriction as the following example shows.

\begin{exam}
\label{degeneration}
For a moment we consider a fibration that has worse than terminal singularities. Define $\pi: X \longrightarrow B$ as follows: $$ X = ( f_n(x, y, z) + t w = 0 ) \subset \mathbb{P}(1,1,1,n)\times \mathbb{A}^1_t$$ where the coordinates $x, y, z, w$ have the weights $(1,1,1,n)$ and the polynomial $f_n$ is general and has degree $n$. Clearly, $X$ has one singular point of type $\frac{1}{n}(1,1,1)$. In particular, $X$ is log terminal. A general fiber is isomorphic to $\mathbb{P}^2$. The fiber over $t=0$ is a cone over a plane curve of degree~$n$. One can construct similar (log terminal) degenerations to a cone over a curve of arbitrarily large genus in del Pezzo fibrations of any degree $1 \leq d \leq 9$, see \cite[3.9]{K-2013}.
\end{exam}

\section{Non-singular fibrations}
\label{constr}
Let $\pi: X \longrightarrow B\ni o$ be a non-singular del Pezzo fibration such that the fiber  $F=\pi^{-1}(o)$ is non-rational. Then $K_F^2\leq 3$ by Proposition \ref{DegreeBound}. We start with the description of the base change construction.

\

\begin{construction} 
\label{ns_construction}
Let $x_0$ be the (simple elliptic) singularity of $F$. By  \cite[4.57]{KM-1998} there exists a weighted blow-up $\psi: Z \longrightarrow X$ of $x_0 \in X$ with the weights $(c_1, c_2, c_3)$ for some $c_i$ such that $F_Z=\psi^{-1}_*F$ is the minimal resolution of $F$. We have $K_{F_Z}=\psi|_{F_Z}^* F - E|_{F_Z}$. In this case $E|_{F_Z}$ is reduced irreducible non-singular elliptic curve, call it $C$. Notice that $F_Z = \psi^* F - nE$ for  $n\geq 2$, and $E\simeq \mathbb{P}(c_1, c_2, c_3)$. Then $$K_Z = \psi^* K_X + (n-1)E, \ n=c_1+c_2+c_3.$$ After the blow-up $\psi$ the threefold $Z$ may obtain some number of cyclic quotient singularities. However, $F_Z$ does not pass through them. Indeed, let $z_0$ be a singular point on $Z$ and suppose that $z_0\in F_Z$. Since $z_0$ is a cyclic quotient singularity, $\mathbb{C}^3$ covers an analytic neighbourhood $U$ of $z_0$. This covering induces an unramified covering of $F_Z \cap U - \{ z_0 \}$. But $F_Z$ is non-singular, hence $\pi_1(F_Z \cap U - \{ z_0\}) = 0$. This is a contradiction.
 
Now we make a base change. Pick a local coordinate $t$ at the point $o \in B$ and consider the following commutative diagram: 
\[
\xymatrix{
W \ar@{->}[r]^{h} \ar[d]^{\pi_W} & Z \ar[d]_{\pi} 
\\
B'  \ar[r]^{\alpha} & B
}
\]
where $B'\simeq B$, $\alpha: t \mapsto t^n$ and $W$ is the normalization of $Z\times_B B'$. At a general point of $E$ the threefold $Z$ is isomorphic to $$\mathrm{Spec}\ \mathbb{C}[x, y, z, t]/(t-z^n),$$ and the fiber $\pi_Z^{-1}(o)$ is given by $(t=0)$. After the base change we have $$\mathrm{Spec}\ \mathbb{C}[x, y, z, t]/(t^n-z^n)$$ which is singular in codimension $1$. After the normalization we see that $h$ is \'etale in the neighbourhood of a general point of $E_W:=h^{-1}(E)$. Similarly, one can check that the morphism $h$ is ramified along $F_W:=h^{-1}(F_Z)$ and at all the singular points of $Z$.


The fiber $\pi^{-1}_W(o)$ is reduced. However, it is reducible: $\pi^{-1}_W(o) = F_W + E_W$, where $E_W$ covers $E$, and ~$F_W$ is isomorphic to $F_Z$ via $h$. More precisely, $h|_{E_W}$ is totally ramified at $E_W \cap~F_W =: C_W$. It follows that $F_W$ is non-singular. Moreover, $F_W$ and $E_W$ intersect transversally. The Galois group $\bbmu_n$ of $h$ acts on $W$ preserving the central fiber. 
We make a $\bbmu_n$-equivariant contraction of $F_W$ (see computation below) and get a $\bbmu_n$-del Pezzo fibration $\pi_V: V \longrightarrow B \ni o$ with a rational central fiber. All these maps are shown in the following diagram:
\begin{equation}
\vcenter{
\label{basechange}
\xymatrix{
F_W + E_W \subset W \ar[r]^{h} \ar[d]^{\tau} & \ar[d]_{\psi} Z \supset F_Z + n E  
\\
E_V \subset V \ar[d]^{\pi_V} & X \supset F \ar[d]_{\pi}
\\
B'  \ar[r]^{\alpha} & B 
}}
\end{equation}
\end{construction}
\

\begin{computation}  
As before, $\phi: T \longrightarrow F$ is the minimal resolution. Denote by $f_T$ a ruling of $T$, and by $f_Z$ a ruling of $F_Z$, put $f:=\psi(f_Z)$. We need the following formulas.
\begin{align*}
K_F \cdot f &= \phi^*K_F \cdot \phi^* f 
=\phi^* K_F \cdot f_T \\ 
&= ( K_{T} + E_0 ) \cdot f_T 
=-2 + 1 = -1,
\end{align*}
\begin{align*}
K_Z \cdot f_Z &= ( \psi^* K_X + (n-1) E ) \cdot f_Z \\
&=K_X \cdot f + n-1 \\
&= K_F \cdot f + n-1 = n-2. 
\end{align*} 

We want to contract $F_W$. We calculate $K_W\cdot f_W $ where $f_W$ is a ruling of $F_W\simeq F_Z$. Since $h$ is totally ramified along  $F_W$ by the Hurwitz formula we have  
$$K_W = h^* K_Z + (n-1) F_W.$$ 

Since $( F_W + E_W ) \equiv 0$ over $B$ we get
\begin{align*}
K_W\cdot f_W &= ( h^* K_Z + (n-1) F_W ) \cdot f_W \\
&= K_Z \cdot f_Z - (n-1) E_W \cdot f_W \\
&= n-2 - (n-1) = -1.
\end{align*}

Thus $F_W$ can be contracted to a non-singular curve. We get a contraction morphism $\tau: W \longrightarrow V$. By the Hurwitz formula for $h|_{E_W}$ we have $$K_{E_W} = h|_{E_W}^* \left( K_E + \frac{n-1}{n} R \right), \ \ K_E = -(c_1+c_2+c_3)H=-nH$$ where $R\sim b H$ is the ramification divisor, $H$ is the positive generator of $\mathrm{Cl}\ E\simeq \mathbb{Z}$, and $b \in \mathbb{Z}_{\geq 1}$. 

\

Now we go in the other direction. We start from a $\bbmu_n$-del Pezzo fibration $\pi_V: V \longrightarrow B \ni o$ with the following conditions: the central fiber $E_V = \pi_V^{-1}(o)$ is $\bbmu_n$-invariant and has a fixed elliptic curve $C_V$ such that the $\bbmu_n$-action on the projectivization of the normal bundle $\mathbb{P}(N_{C/V})$ is trivial. We blow-up $C_V$ and obtain a $\bbmu_n$-del Pezzo fibration $\pi_W: W\longrightarrow B \ni o$ with the central fiber $E_W + F_W$. Denote the contraction morphism by $\tau: W \longrightarrow V$. By assumption, $\bbmu_n$ fixes $F_W$ pointwise. We take the quotient $h: W\longrightarrow Z$ by the $\bbmu_n$-action. Notice that $h$ is ramified along $F_W$, and $E_W$ is a degree $n$ cover of $h(E_W)=:E$. Now we show that $E$ can be contracted. One checks that any curve in $E$ is $K_Z$-negative. It follows that there is a contraction morphism $\psi: Z \longrightarrow X$ to a terminal del Pezzo fibration $\pi: X \longrightarrow B \ni o$. We claim that the point $x_0:=\psi(E)$ is non-singular on $X$. We consider three cases.
\begin{enumerate}
\item
$d=3$. One checks that $E_W / \bbmu_3 \simeq \mathbb{P}^2$, and $f$ is the blow-down to a non-singular point.
\item
$d=2$. One checks that $E_W / \bbmu_4 \simeq \mathbb{P}(1,1,2)$, and $f$ is the inverse of a weighted blow-up with the weights $(1, 1, 2)$ of a non-singular point.
\item
$d=1$. One checks that $E_W / \bbmu_6 \simeq \mathbb{P}(1,2,3)$, and $f$ is the inverse of a weighted blow-up with the weights $(1, 2, 3)$ of a non-singular point.
\end{enumerate}


\end{computation}

We are ready to prove the following theorem.

\begin{thm}
\label{smoothclassification}
Let $\pi: X \longrightarrow B\ni o$ be a non-singular del Pezzo fibration such that the fiber $F = \pi^{-1}(o)$ is non-rational. Then there is 1-to-1 correspondence between such $\pi$ and $\bbmu_n$-del Pezzo fibrations $\pi_V: V \longrightarrow B\ni o$ with the following properties: 

\begin{itemize}
\item
the central fiber $E_V=\pi_V^{-1}(o)$ is a non-singular $\bbmu_n$-minimal del Pezzo surface of degree $d$, 
\item
the locus of fixed points of $\bbmu_n$ is an elliptic curve $C\subset E_V$,
\item
the action of $\bbmu_n$ on $\mathbb{P}(N_{C/V})$ is trivial.
\end{itemize}

There are only three possible cases (here $d=K_{F}^2$):

\begin{enumerate}
\item \label{ns_case1}
$d=3, \ n=3, \\ 
E_V \simeq ( w^3 = q_3(x,y,z)) \subset \mathbb{P}^3,\\ \bbmu_3: w \mapsto \zeta_3 w, \\
F \simeq ( 0 = q_3(x,y,z)) \subset \mathbb{P}^3;$

\
\item \label{ns_case2}
$d=2,\ n=4, \\ 
E_V \simeq ( w^2 = q_4(x, y) + z^4 ) \subset \mathbb{P}(1,1,1,2), \\ \bbmu_4: z\mapsto \sqrt{-1} z, \\ 
F \simeq ( w^2 = q_4(x, y) ) \subset \mathbb{P}(1,1,1,2);$

\ 
\item \label{ns_case3}
$d=1, \ n=6, \\ 
E_V \simeq ( w^2 = z^3 + \alpha x^4 z + \beta x^6 + y^6 ) \subset \mathbb{P}(1,1,2,3), \\ \bbmu_6: y \mapsto \zeta_6 y, , \ \alpha, \beta \in \mathbb{C}, \\
F \simeq ( w^2 = z^3 + \alpha x^4 z + \beta x^6 ) \subset \mathbb{P}(1,1,2,3).
$
\end{enumerate}
\end{thm}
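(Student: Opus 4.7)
The plan is to use Construction \ref{ns_construction} and the surrounding computation, which between them already assemble both directions of the correspondence; my remaining task is to verify the three bulleted properties of $\pi_V$, confirm that the two constructions are mutually inverse, and extract the explicit equations. Throughout, $d=K_F^2\in\{1,2,3\}$ by Proposition \ref{DegreeBound}, and the weights $(c_1,c_2,c_3)$ of the weighted blow-up $\psi$ resolving a simple elliptic singularity of type $\widetilde{E}_6,\widetilde{E}_7,\widetilde{E}_8$ are forced to be $(1,1,1)$, $(1,1,2)$, or $(1,2,3)$, so that $n=c_1+c_2+c_3\in\{3,4,6\}$.

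For the forward direction, starting with $\pi\colon X\to B$, I would read off smoothness of $E_V$ from the construction: $E_W$ is smooth, the singularities of $W$ lie off $E_W$ as already argued, and $\tau|_{E_W}$ is a morphism of smooth varieties. The degree equality $K_{E_V}^2=d$ follows by tracking intersection numbers through $\tau$. For $\bbmu_n$-minimality I would use that $V\to X$ is a Galois $\bbmu_n$-cover, so the $\bbmu_n$-invariant Picard group of $V$ matches that of $X$ up to birational modifications, yielding $\rho^{\bbmu_n}(V/B)=\rho(X/B)=1$. The fixed locus is $C=\tau(F_W)$, identified with the smooth elliptic curve from the construction since $h$ is ramified exactly along $F_W$ and $F_W$ is fixed pointwise by $\bbmu_n$. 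Triviality of the $\bbmu_n$-action on $\mathbb{P}(N_{C/V})$ is then immediate because blowing up $C$ on $V$ recovers $W$, so $\mathbb{P}(N_{C/V})=F_W$, on which $\bbmu_n$ acts trivially.

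For the reverse direction, starting from $\pi_V$ satisfying the three properties, I would blow up $C$ to obtain $W$; triviality of the action on $\mathbb{P}(N_{C/V})$ guarantees that $\bbmu_n$ fixes $F_W$ pointwise, so the quotient $h\colon W\to Z$ is ramified exactly along $F_W$. Contractibility of $E=h(E_W)$ to a point $x_0$ by some $\psi$ follows from the $K_Z$-negativity already established in the computation. Smoothness of $x_0\in X$ is verified case by case via the identifications $E_W/\bbmu_n\simeq\mathbb{P}(c_1,c_2,c_3)$ listed there, recognizing $\psi|_E$ as the inverse of the corresponding weighted blow-up of a smooth point. Mutual inversion of the two directions is essentially formal: each step (weighted blow-up, base change or quotient by the Galois action, extremal contraction) is canonically determined by the data.

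It remains to classify, for each $(d,n)\in\{(3,3),(2,4),(1,6)\}$, the $\bbmu_n$-minimal smooth del Pezzo surface of degree $d$ equipped with a $\bbmu_n$-action whose fixed locus is an elliptic curve and whose action on the normal bundle projectivization of that curve is trivial. I would decompose $H^0(-K_{E_V})$ into $\bbmu_n$-eigenspaces: in the anticanonical embedding this diagonalizes the action and forces the defining equations to be eigenvectors as well. Requiring the fixed divisor to be an elliptic curve of the appropriate arithmetic genus then singles out the Fermat-like form $w^3=q_3(x,y,z)\subset\mathbb{P}^3$; the degree two double cover with branch quartic $q_4(x,y)+z^4$; and the Weierstrass-type hypersurface $w^2=z^3+\alpha x^4 z+\beta x^6+y^6$ in $\mathbb{P}(1,1,2,3)$. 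I expect the main obstacle to lie in this classification, especially in ruling out competing actions whose fixed loci have the wrong dimension or genus, and in confirming that the chosen action is genuinely $\bbmu_n$-minimal rather than reducible to a proper subgroup after a change of coordinates.
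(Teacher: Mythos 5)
Your overall route is the same as the paper's: both directions of the correspondence are extracted from Construction \ref{ns_construction} and the subsequent Computation (weighted blow-up with weights $(1,1,1)$, $(1,1,2)$, $(1,2,3)$, base change of degree $n=3,4,6$, contraction of $F_W$, and, in reverse, blow-up of $C$, quotient by $\bbmu_n$, contraction of $E$ to a smooth point via $E_W/\bbmu_n\simeq\mathbb{P}(c_1,c_2,c_3)$), with $d\leq 3$ supplied by Proposition \ref{DegreeBound}. The one genuine divergence is the last step: the paper obtains the explicit pairs $(E_V,\bbmu_n)$ by citing the Dolgachev--Iskovskikh classification \cite{DI-2010}, whereas you propose to re-derive them by decomposing the anticanonical ring of $E_V$ into $\bbmu_n$-eigenspaces. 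That is viable and more self-contained, but it is exactly the part you leave schematic: for $d=2,1$ one must work with the models in $\mathbb{P}(1,1,1,2)$ and $\mathbb{P}(1,1,2,3)$, and one must genuinely rule out eigenvalue patterns whose pointwise fixed locus is not an elliptic curve (note that a del Pezzo surface can contain elliptic curves not in $|-K_{E_V}|$, e.g.\ quartic elliptic curves on a cubic surface, so the class of the fixed curve needs an argument); the citation buys precisely this bookkeeping.

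Two of your justifications are off, although the conclusions stand. First, you deduce smoothness of $E_V$ from ``the singularities of $W$ lie off $E_W$ as already argued''; what Construction \ref{ns_construction} argues is that the quotient singularities of $Z$ lie off $F_Z$ --- they lie on $E$, so their preimages lie on $E_W$, and the statement actually needed is that $W$ is non-singular at those preimages (the cyclic cover undoes the $\frac{1}{2}(1,1,1)$ and $\frac{1}{3}(1,1,2)$ points). This is the local check the paper performs case by case, and your sentence should be replaced by that computation. Second, $\bbmu_n$-minimality of the central fiber is not the same as $\rho^{\bbmu_n}(V/B)=1$: your Picard-rank transfer gives the latter, but restriction to the central fiber need not preserve the invariant rank (indeed, in the ordinary-double-point analogue, Theorem \ref{sing_classification}, one has $\rho^{\bbmu_2}(E_V)=2$ while the fibration is still relatively extremal). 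Minimality of $E_V$ here should instead be read off from the explicit equations once the classification step is done.
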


\begin{proof}
By Proposition \ref{DegreeBound} we have $d\leq 3$. We consider three cases: $d=-E_0^2 = 1, 2, 3$. According to \cite[4.57]{KM-1998}, $\mathrm{mult}_{x_0} F = 3, 2, 2$, respectively. We apply the general construction described above.

\

\textbf{Case $ d = 3 $}. In this case we can take $\psi$ to be the standard blow-up of $x_0$. We have $$K_Z = \psi^* K_X + 2E, \ \ F_Z = \psi^* F - 3 E$$ and $E\simeq \mathbb{P}^2$. By adjunction $K_{F_Z} = \psi|_{F_Z}^* K_F - E|_{F_Z}$, and $F_Z$ is non-singular. By Construction we get a non-singular fibration into cubic surfaces $\pi_V: V \longrightarrow B \ni o$ with the non-singular fiber $E_V = \pi^{-1}(o)$. Moreover, the group $\bbmu_3$ acts on $\pi_V$, and the fixed curve of this action is a non-singular elliptic curve $C_V$. Since $E_V$ is non-singular del Pezzo surface with the action of $\bbmu_3$, we may apply the classification of \cite{DI-2010} and get the case \ref{ns_case1} of the theorem.

\

\textbf{Case $ d = 2 $}. By \cite[4.57]{KM-1998} up to an analytic change of coordinates in the neighbourhood of $x_0$ the fiber $F \subset X$ is given by the equation $$q_4( x, y )+ w^2 = 0,$$ and $\mathrm{mult}_{x_0} q_4 = 4$. Blow up $x_0 \in X$ with the weights $(1,1,2)$ in $x, y, z$. Notice that the blow-up with the weights $(1,1,1)$ leads to a non-normal surface $F_Z$. We get $$K_Z = \psi^* K_X + 3E, \ \ F_Z = \psi^* F - 4 E$$ where $E\simeq \mathbb{P}(1,1,2)$ is the exceptional divisor and $F_Z=\psi^{-1}_*F$. Notice that $F_Z$ is non-singular, and $Z$ has one singular point $p$ of type $\frac{1}{2}(1, 1, 1)$ which corresponds to the unique singular point $p$ of $E$. Put $C = E \cap F_Z$. The curve $C$ does not pass through~$p$.

We apply Construction \ref{ns_construction}. Locally one checks that $h$ is ramified at two points $q_1, q_2 \in W$ such that $\{q_1, q_2\} = h^{-1}(p)$, and that $W$ is non-singular. Using the classification of \cite{DI-2010} we get the case \ref{ns_case2} of the theorem.

\

\textbf{Case $ d = 1 $}. By \cite[4.57]{KM-1998} up to an analytic change of coordinates in the neighbourhood of $x_0$ the fiber $F \subset X$ is given by the equation $$ w^2 + z^3 + z q_4 (x) + q_6(x) =0 $$ where $\mathrm{mult}_{x_0} q_i \geq i$. We blow-up $x_0 \in X$ with the weights $(1,2,3)$ in $x, y, z$. Denote the blow-up morphism by $\psi: Z \longrightarrow X$. We get $$K_Z = \psi^* K_X + 5E, \ \ F_Z = \psi^* F - 6 E$$ where $E\simeq \mathbb{P}(1,2,3)$. Notice that $F_Z$ is non-singular. 

It is easy to see that $Z$ has two singular points $p_1$ and $p_2$ of types $\frac{1}{2}(1, 1, 1)$ and $\frac{1}{3}(1,1,2)$ which correspond to the singular points of $E$. Put $C = E \cap F_Z$. The curve $C$ does not pass through $p_1$, $p_2$. Locally one checks that $h$ is ramified at the preimages of $p_1$ and $p_2$, and that $W$ is non-singular. Using the classification of \cite{DI-2010} we get the case \ref{ns_case3} of the theorem.\end{proof}

\

\section{Ordinary double points}

Suppose that $\pi: X \longrightarrow B\ni o$ is a del Pezzo fibration with singularities that are analytically isomorphic to $(xy+zt=0)\subset \mathbb{C}^4$. Such singularities are called \textit{ordinary double points}. By Remark \ref{reduced} the non-rational fiber $F=\pi^{-1}(o)$ is a reduced irreducible normal Gorenstein surface with a unique simple elliptic singularity $x_0\in F$. 

\begin{proposition}
\label{twocases}
Let $\pi: X \longrightarrow B\ni o$ be a del Pezzo fibration with at worst ordinary double points. Suppose that the central fiber $F=\pi^{-1}(o)$ is non-rational and $X$ has at least one singular point on $F$. Then $F$ is a generalised cone over an elliptic curve and its degree $d=K_F^2$ is equal to either $1$ or~$4$.
\end{proposition}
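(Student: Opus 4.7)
My plan is to combine Proposition~\ref{DegreeBound} with a careful local analysis of $F$ at its elliptic singularity, where the ODP structure of the total space imposes strong constraints.

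First, by Proposition~\ref{DegreeBound} (applicable since ODPs are Gorenstein), $F$ is a generalised cone over an elliptic curve with $d = K_F^2 \leq 4$, so $d \in \{1, 2, 3, 4\}$. By Remark~\ref{reduced}, $F$ is normal with a unique simple elliptic singularity at a point $x_0 \in F$ (the cone vertex). The first step is to show that any ODP of $X$ on $F$ must coincide with $x_0$. Indeed, at any smooth point $p$ of $F$, if $g \in \mathfrak{m}_{X,p}$ locally defines the Cartier divisor $F \subset X$ and $u_1, u_2$ is a regular system of parameters of $\mathcal{O}_{F,p}$, then lifts $\tilde u_1, \tilde u_2 \in \mathfrak{m}_{X,p}$ together with $g$ generate $\mathfrak{m}_{X,p}$, so $\mathcal{O}_{X,p}$ is regular of dimension three. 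Hence the hypothesized ODP must sit at $x_0$.

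The main argument is a local computation at $x_0$. Analytically, $X \cong V(xy+zt) \subset \mathbb{C}^4$ and $F$ is cut out on $X$ by some $g \in \mathfrak{m}_{X,x_0}$. Lifting $g$ to $\mathbb{C}[[x,y,z,t]]$ and writing $g = g_1 + g_2 + \cdots$ by total degree, a direct analysis of the initial ideal of $(xy+zt, g)$ shows that this ideal equals $(g_1, xy+zt)$ when $g_1 \neq 0$, so that the tangent cone of $F$ at $x_0$ is $V(g_1, xy+zt) \subset \mathbb{C}^4$ with multiplicity $2$; when $g_1 = 0$ the multiplicity of $F$ at $x_0$ is at least $4$ (coming from the intersection of two quadrics $g_2$ and $xy+zt$). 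So $\mathrm{mult}_{x_0}(F) \in \{2\} \cup \{n \geq 4\}$, and in particular this invariant is never $3$. This immediately excludes $d = 3$, since the $\widetilde E_6$ singularity (cone over a smooth plane cubic, arising for $d=3$) has multiplicity exactly $3$ at its vertex.

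To exclude $d = 2$, I refine the tangent cone analysis in the case $g_1 \neq 0$: the intersection $V(g_1, xy+zt)$ is a hyperplane section of the smooth projective quadric $V(xy+zt) \subset \mathbb{P}^3$, hence a conic of rank $3$ (smooth, when $V(g_1)$ is not tangent to the quadric) or rank $2$ (two distinct lines, when $V(g_1)$ is tangent), but never rank $1$ (a double line). So the tangent cone of $F$ at $x_0$ is always reduced; on the other hand, the $\widetilde E_7$ singularity (appearing for $d = 2$) has the normal form $w^2 + q_4(x,y) = 0$ with a non-reduced (double-plane) tangent cone, contradicting the above. Therefore $d = 2$ is excluded. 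The remaining values $d = 1$ and $d = 4$ both occur, and their explicit correspondence with $\bbmu_n$-del Pezzo fibrations (in the spirit of Construction~\ref{ns_construction}) is carried out in Theorem~\ref{sing_classification}. The main obstacle in the above argument is the tangent cone computation and its comparison with the standard local forms of simple elliptic singularities; the key geometric input is that a smooth quadric in $\mathbb{P}^3$ admits no rank-$1$ hyperplane section, which is precisely what distinguishes the ODP case from the smooth case treated in Theorem~\ref{smoothclassification}.
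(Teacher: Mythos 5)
Your proof is correct and reaches the stated conclusion by a genuinely different route from the paper's. You work purely locally at the node: writing $F=V(g)$ on $(xy+zt=0)$, you show that either $g\notin\mathfrak{m}^2$, in which case $\mathrm{mult}_{x_0}F=2$ and the tangent cone is the cone over a conic of rank $2$ or $3$ (hence reduced), or $g\in\mathfrak{m}^2$, in which case the multiplicity is at least $4$ (and the embedding dimension is $4$); comparing with the normal forms of the simple elliptic germs ($\widetilde E_6$ has multiplicity $3$, $\widetilde E_7$ has a double-plane tangent cone) kills $d=3$ and $d=2$. The two initial-ideal claims you state tersely are correct but deserve a word: they follow from the standard fact that if the initial forms of a regular sequence again form a regular sequence then they generate the initial ideal (for $g_1\neq0$ use that $xy+zt$ restricted to any hyperplane has rank $\geq 2$; for $g_1=0$ one should take a lift of $g$ of maximal $\mathfrak{m}$-adic order $k\geq 2$, getting multiplicity $2k\geq 4$). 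The paper instead blows up the node ($E\simeq\mathbb{P}^1\times\mathbb{P}^1$), splits into cases according to the coefficient $n$ in $\psi^*F=F_Z+nE$, and pins down $d$ via discrepancies (Lemma~\ref{simpleelliptic}), the Noether formula and the embedding-dimension bound of \cite[4.57]{KM-1998}; that route is longer, but it produces the global data ($F_Z$, the curve $E\cap F_Z$, the two $(-1)$-curves) that is reused in the proof of Theorem~\ref{sing_classification}, which your purely local argument does not provide.

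One caution: your closing assertion that $d=1$ and $d=4$ ``both occur'' is not proved and sits uneasily with your own computation. The degree-$1$ germ is of type $\widetilde E_8$ (normal form $w^2+z^3+zq_4(x)+q_6(x)$, exactly as the paper quotes from \cite[4.57]{KM-1998}), so it too has multiplicity $2$ with a non-reduced double-plane tangent cone; your rank argument therefore excludes it in the same way it excludes $\widetilde E_7$, i.e.\ your method actually yields $d=4$ only. This is in tension with the paper's case $n=1$ (and with case (ii) of Theorem~\ref{sing_classification}): there $E\cap F_Z$ is a reduced $(1,1)$-divisor and $g\notin\mathfrak{m}^2$, so by your analysis $(F,x_0)$ would be a double point with reduced tangent cone in embedding dimension $3$, hence an $A_k$ point rather than a simple elliptic one. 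For the proposition as stated none of this matters, since excluding $d=2,3$ suffices; but you should either justify the occurrence of $d=1$ or drop/flag that remark rather than assert it.
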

\begin{proof}The first claim again follows from the classification \cite{HW-1981}. Since~$F$ is Cartier, the point $x_0$ is the only singularity of $X$ on $F$. It corresponds to the vertex of the cone. Consider the standard resolution $\psi: Z \longrightarrow X$ of $x_0$. The exceptional divisor $E$ is isomorphic to $\mathbb{P}^1\times \mathbb{P}^1$. We have  
\begin{align*}
K_Z &= \psi^* K_X + E, \\
F_Z &= \psi^* F - n E, \\
K_{F_Z} &= \psi|_{F_Z}^* K_F - (n - 1)E|_{F_Z}
\end{align*}
where $n\geq 1$. We consider two cases: $n\geq 2$ and $n=1$.

\

\textbf{Case $n\geq 2$}. We show that $n=2$ and $F_Z$ is non-singular. Notice that the exceptional divisors of $\psi|_{F_Z}$ have negative integral discrepancies. Consider the normalization $\nu: \overline{F_Z} \longrightarrow F_Z$. The resulting discrepancies of $\nu \circ \psi|_{F_Z}$ are also negative and integral. Since $F$ has a simple elliptic singularity, any divisor on $\overline{F_Z}$ with negative discrepancy should appear on the minimal resolution $\phi: T \longrightarrow F$. Recall that there is only one $\phi$-exceptional divisor $E_0$, and its discrepancy is $-1$. Hence there is only one $\nu \circ \psi|_{F_Z}$-exceptional prime divisor on $\overline{F_Z}$, and $\nu$ is crepant. Thus $F_Z$ is normal, $E|_{F_Z}$ is reduced, and $F_Z$ is dominated by $T$. Thus $F_Z$ is non-singular and $n=2$. Moreover, $E\cap F_Z$ is a non-singular elliptic curve~$C$. On $E$ it is given by a divisor of bidegree $(2,2)$. 

\

\textbf{Case $n=1$}. Then $F_Z = \psi^*F - E$, and ${F_Z}|_E = - E|_E$, so $E\cap F_Z$ is a divisor of bidegree $(1,1)$ on $E$. In particular, it is reduced. Hence $F_Z$ is normal. Moreover, $E\cap F_Z$ cannot be irreducible: in this case $F_Z$ would be non-singular, but any resolution of $F$ should contain a non-rational exceptional curve. Hence $E\cap F_Z$ is a union of two intersecting lines $L_1$ and $L_2$. The point $p$ of their intersection is singular on $F_Z$. The morphism $\psi|_{F_Z}$ is crepant: $K_{F_Z} = \psi|_{F_Z}^* K_F$. Consider the minimal resolution $\chi: \tilde{F} \longrightarrow F_Z $ and the commutative diagram
\begin{equation}
\vcenter{
\label{resolutions1}
\xymatrix{
F_Z  \ar[d]_{\psi|_{F_Z} } & \tilde{F} \ar[l]_{\chi } \ar[d]^{\eta}  \\
F & T \ar[l]_{\phi } \\
}
}
\end{equation}

The morphism $\eta$ exists since $T$ is the minimal resolution of $F$. 

\

\begin{lem}
\label{simpleelliptic}
The point $p$ is a simple elliptic singularity on $F_Z$, and $\eta$ is the blow-down of two $(-1)$-curves $\chi_*^{-1}L_1$ and $\chi^{-1}_*L_2$.
\end{lem}
\begin{proof}

Suppose that there exists a $\chi$-exceptional curve $E'$ such that $E'\neq \tilde{E_0}:=\eta^{-1}_*E_0$. Since $\chi^{-1}(p)$ is connected we may assume that $E'$ intersects~$\tilde{E}_0$. One checks that $\chi$ is crepant at all $\chi$-exceptional curves except $\tilde{E_0}$ (because $T$ contains only one $\phi$-exceptional curve $E_0$ with negative discrepancy). Since $K_{\tilde{F}}$ is $\chi$-nef we have
$$ 0 \leq K_{\tilde{F}} \cdot E' = ( \chi^* \psi|_{F_Z}^* K_F - \tilde{E_0} ) \cdot E' = - \tilde{E_0} \cdot E' \leq 0.$$

Thus $E'$ does not intersect $\tilde{E_0}$ which is a contradiction. Thus $\tilde{E_0}$ is the unique $\chi$-exceptional curve. It is a non-singular elliptic curve since it dominates $E_0 \subset T$. Clearly, $\chi^{-1}_*L_1$ and $\chi^{-1}_*L_2$ are disjoint $(-1)$-curves. 
\end{proof}

\

We have $K_{\tilde{F}} =\chi^* \psi|_{F_Z}^* K_F - \tilde{E_0}.$ Thus $K_{\tilde{F}}^2 = d + \tilde{E_0}^2$. By the Noether formula we get $K_{\tilde{F}}^2+\chi_{\mathrm{top}}(\tilde{F})=0$. Here $\chi_{\mathrm{top}}(\tilde{F})=2$ since $\tilde{F}$ is a blow-up of two points on the ruled surface~$T$. Thus $K_{\tilde{F}}^2=-2$, and $-\tilde{E_0}^2=d+2$. On the other hand, by \cite[4.57]{KM-1998} we have $-\tilde{E_0}^2 \leq \mathrm{dim}\ \mathrm{T}_{p, Z} = 3$ (recall that $Z$ is non-singular). Hence $d+2 \leq 3$, thus $d=1$ and $E_0^2=-1$.
\end{proof}

We are ready to prove

\begin{thm}
\label{sing_classification}
Let $\pi: X \longrightarrow B\ni o$ be a del Pezzo fibration with at worst ordinary double points. Suppose that the fiber $F = \pi^{-1}(o)$ is non-rational and $X$ has at least one singular point on $F$. Then there is 1-to-1 correspondence between such $\pi$ and (weak and analytic in the case \ref{odp_case2} below) $\bbmu_n$-del Pezzo fibrations $\pi_V: V \longrightarrow B \ni o$ with the following conditions: 

\begin{itemize}
\item
the central fiber $E_V=\pi_V^{-1}(o)$ is a non-singular (weak in the case \ref{odp_case2} below) del Pezzo surface of degree $d$ with $\rho^{\bbmu_2}(E_V)=2$, 
\item
one-dimensional locus of fixed points of $\bbmu_n$ is an elliptic curve $C\subset E_V$,
\item
the action of $\bbmu_n$ on $\mathbb{P}(N_{C/V})$ is trivial.
\end{itemize}
 
There are only two possible cases (here $d=K_{F}^2$):

\begin{enumerate}
\item\label{odp_case1}
$d=4, \ n=2,\ E_V$ has two $\bbmu_2$-conic bundle structures, 
\item\label{odp_case2}
$d=1,\ n=4,\ E_V$ has one $\bbmu_4$-invariant $(-1)$-curve. There exists one $\bbmu_4$-invariant point.
\end{enumerate}
\begin{proof}
By Proposition \ref{twocases} there are two cases two consider: $d=1$ or $4$. 

\

\textbf{Case $d=4$.}
We are in the setting of the first case of Proposition \ref{twocases}. We make the base change. We will construct the following diagram:
\begin{equation}
\vcenter{
\label{basechange}
\xymatrix{
F_W + E_W \subset W \ar[r]^{h} \ar[d]^{\tau} & \ar[d]_{\psi} Z \supset F_Z + 2 E 
\\
E_V \subset V \ar[d]^{\pi_V} & X \supset F \ar[d]_{\pi}
\\
B'  \ar[r]^{\alpha} & B 
}}
\end{equation}
here $B'\simeq B$, $\alpha: t\mapsto t^2$, and $W$ is the normalization of $Z \times_B B'$. As in Construction \ref{ns_construction}, one checks that $W$ is non-singular, $h$ is ramified along $F_W:=h^{-1}(F_Z)$, and the covering map $$h|_{E_W}: h^{-1}(E) =: E_W \longrightarrow E$$ is ramified along a non-singular elliptic curve $E\cap F_Z$. The Galois group $\bbmu_2$ of $h$ acts on $W$. By the Hurwitz formula $E_W$ is a quartic del Pezzo surface. One checks that $F_W$ can be contracted to a non-singular elliptic curve, so we obtain a $\bbmu_2$-equivariant morphism $\tau: W \longrightarrow V$. Hence $\pi_V: V\longrightarrow B \ni o$ is a fibration into quartic del Pezzo surfaces with a non-singular central fiber $E_V$. Notice that $\rho^{\bbmu_2}(E_V) = 2$ since $E_V$ admits two $\bbmu_2$-equivariant conic bundle structures.

If we start from a $\bbmu_2$-del Pezzo fibration $\pi_V: V \longrightarrow B \ni o$ of degree $4$ with the properties as in the theorem, one checks that we can go along the diagram in the other direction and get a del Pezzo fibration $\pi: X \longrightarrow B \ni o$ with a non-rational central fiber and an ordinary double point.

\

\textbf{Case $d=1$.} Let $\psi: Z \longrightarrow X$ be a small resolution of $x_0\in X$. That is, the exceptional locus of $\psi$ is a curve $L\simeq \mathbb{P}^1$ and $Z$ is a non-singular complex manifold. Notice that $F_Z:=\psi^{-1}_*F$ is a singular complex surface. As in Lemma \ref{simpleelliptic} one checks that $F_Z$ has one simple elliptic singularity, say $z_0\in F_Z \subset Z$. Arguing as in Lemma \ref{simpleelliptic} we see that the self-intersection of the exceptional elliptic curve equals $-2$. Let $\psi': Z' \longrightarrow Z$ be the blow-up with the weights $(1,1,2)$. From \cite[4.57]{KM-1998} it follows that $F_{Z'} = \psi'^{-1}_*F_Z$ is the minimal resolution of $F_Z$. We have 
\begin{align*}
K_{Z'} &= \psi'^*K_Z + 3 E', \\
F_{Z'} &= \psi'^*F_Z - 4 E', \\
K_{F_{Z'}} &= \psi'|_{F_{Z'}}^* K_{F_Z} - E'|_{F_{Z'}}
\end{align*}
where $E' \simeq \mathbb{P}(1,1,2)$ and $F_{Z'}=\psi'^{-1}_*F_Z$. Notice that $Z'$ has one singular point of type $\frac{1}{2}(1,1,1)$, and $F_{Z'}$ has one reducible fiber. We will construct the following diagram 
\begin{equation}
\vcenter{
\label{basechange}
\xymatrix{
F_{W'} + E_{W'} \subset W' \ar[dd]^{\tau} & F_W + E_W \subset W  \ar@{-->}[l]_{h'} \ar[r]^{h} & \ar[d]_{\psi'} Z' \supset F_{Z'} + 2 E_{Z'} 
\\
& & Z \supset F_Z \ar[d]_{\psi}
\\
E_V \subset V \ar[d]^{\pi_V} & & X \supset F \ar[d]_{\pi}
\\
B'  \ar[rr]^{\alpha} & & B 
}}
\end{equation}
where $B'\simeq B$, $\alpha: t\mapsto t^4$, and $W$ is the normalization of $Z \times_B B'$. As in the previous case $W$ is non-singular, $h$ is ramified along $F_W:=h^{-1}(F_Z)$, and $h|_{E_W}$ is ramified along a non-singular elliptic curve $E_{W}\cap F_{W}$ where $E_W:=h^{-1}(E_{Z'})$. The Galois group $\bbmu_4$ of $h$ acts on $W$, and the fiber $\pi_W^{-1}(o) = F_W + E_W$ is reduced. By the Hurwitz formula $E_W$ is a degree $2$ del Pezzo surface. One checks that $E_W$ is non-singular. Notice that $F_W\simeq F_{Z'}$ has one reducible fiber $f'_W=f_1+f_2$. Both $f_1$ and $f_2$ are $(-1)$-curves on $F_W$. Without loss of generality, assume that $f_1$ intersects the elliptic curve $C_W:=F_W\cap E_W$. 

We make a flop $h'$ in the curve $f_1$. It is the simplest Atiyah-Kulikov flop, see \cite[4.2]{Ku-1977}. We obtain a threefold $W'$ with the central fiber $E_{W'}+F_{W'}$ where $E_{W'}$ and $F_{W'}$ are the strict transforms of $E_W$ and $F_W$, $E_{W'}$ is the blow-up of a point in $E_W$, and $F_W'$ is the blow-down of $f_1$. Thus, $E_{W'}$ is a non-singular weak (that is $-K_{E_{W'}}$ is nef and big) del Pezzo surface of degree $1$. Then $F_{W'}$ is a ruled surface that can be contracted onto a curve, and we get a degree $1$ del Pezzo fibration $\pi_V: V \longrightarrow B \ni o$. 

If we start from a $\bbmu_4$-del Pezzo fibration $\pi_V: V \longrightarrow B \ni o$ of degree $1$ with the properties as in the theorem, one checks that we can go along the diagram in the other direction and get a del Pezzo fibration $\pi: X \longrightarrow B \ni o$ with a non-rational central fiber and an ordinary double point.
\end{proof}


\end{thm}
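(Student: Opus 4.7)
The plan is to handle each of the two possible degrees $d \in \{1,4\}$, which by Proposition~\ref{twocases} are the only options, by adapting the base-change construction of Section~\ref{constr} to the ordinary double point $x_0 \in F \subset X$. In both cases the general shape of the argument mirrors Construction~\ref{ns_construction}: resolve, base-change, normalize, identify the ramification, contract the ramification component to obtain the $\bbmu_n$-fibration.

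For $d=4$, the standard resolution $\psi: Z \longrightarrow X$ of $x_0$ has exceptional divisor $E \simeq \mathbb{P}^1 \times \mathbb{P}^1$, with $F_Z$ smooth and meeting $E$ transversally along an elliptic curve $C$ of bidegree $(2,2)$, exactly as in the first case of Proposition~\ref{twocases}. I would perform the base change $\alpha: t \mapsto t^2$ and take $W$ to be the normalization of $Z \times_B B'$. A local calculation, essentially identical to the one in Construction~\ref{ns_construction}, shows that $W$ is non-singular, that $h: W \longrightarrow Z$ is ramified exactly along $F_W := h^{-1}(F_Z)$, and that the Galois involution $\bbmu_2$ fixes $F_W$ pointwise. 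By Hurwitz, $E_W := h^{-1}(E)$ is a smooth quartic del Pezzo surface, and $F_W$ is a ruled surface that can be contracted $\bbmu_2$-equivariantly onto a smooth elliptic curve $C_V$ via $\tau: W \longrightarrow V$. The equality $\rho^{\bbmu_2}(E_V)=2$ follows from the two $\bbmu_2$-equivariant conic bundle structures on $E_V$ pulled back from the two rulings of $E$, and triviality of the $\bbmu_2$-action on $\mathbb{P}(N_{C_V/V})$ is inherited from the fact that $F_W$ is the ramification locus.

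For $d=1$, the extra difficulty is that an ordinary double point admits only a small analytic (non-algebraic) resolution $\psi: Z \longrightarrow X$, which accounts for the analytic hypothesis in case~\ref{odp_case2}. Following the setup of the $n=1$ case of Proposition~\ref{twocases}, the strict transform $F_Z$ acquires a simple elliptic singularity $z_0$ whose exceptional curve has self-intersection $-2$. I would then apply the $(1,1,2)$-weighted blow-up $\psi': Z' \longrightarrow Z$ at $z_0$ dictated by \cite[4.57]{KM-1998}, producing the minimal resolution of $F_Z$ inside $F_{Z'}$, with exceptional divisor $E_{Z'} \simeq \mathbb{P}(1,1,2)$ and one cyclic quotient singularity of type $\frac{1}{2}(1,1,1)$ on $Z'$. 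The base change $\alpha: t \mapsto t^4$ followed by normalization gives a smooth $W$ with $h$ ramified along $F_W$, and $E_W$ a smooth degree $2$ del Pezzo surface. The main geometric subtlety is the reducible fiber $f_W = f_1 + f_2$ of $F_W$ coming from the $\frac{1}{2}(1,1,1)$-point on $Z'$: I would apply the Atiyah--Kulikov flop in the $(-1)$-curve $f_1$ meeting $C_W = F_W \cap E_W$ to obtain $W'$, whose central fiber $E_{W'} + F_{W'}$ consists of a weak del Pezzo $E_{W'}$ of degree $1$ and a ruled surface $F_{W'}$ contractible onto a curve, yielding the weak $\bbmu_4$-del Pezzo fibration $\pi_V: V \longrightarrow B$.

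For the reverse direction, starting from $\pi_V$ satisfying the listed conditions, one runs the diagram backwards: blow up the fixed elliptic curve $C \subset E_V$, take the quotient by $\bbmu_n$ (in case $d=1$, undo the flop first), and contract the image of the exceptional divisor. Triviality of the $\bbmu_n$-action on $\mathbb{P}(N_{C/V})$ is what ensures that the ramification geometry matches the base-change setup, so the construction is inverse to the forward direction. I expect the main obstacle to be in the $d=1$ case: verifying that the Atiyah--Kulikov flop can be reversed unambiguously starting only from the data of a single $\bbmu_4$-invariant $(-1)$-curve on $E_V$, and checking that the final contraction produces an ordinary double point on $X$ rather than a worse singularity or a smooth point. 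This last point is what forces the degree list to stop at $d \in \{1,4\}$ and is where the weak and analytic qualifiers enter essentially.
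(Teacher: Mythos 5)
Your proposal follows essentially the same route as the paper's own proof: the case division $d\in\{1,4\}$ via Proposition~\ref{twocases}, the base change $t\mapsto t^2$ after the standard resolution for $d=4$, and for $d=1$ the analytic small resolution, the $(1,1,2)$-weighted blow-up, the base change $t\mapsto t^4$ and the Atiyah--Kulikov flop in the $(-1)$-curve meeting $C_W$, with the reverse direction obtained by running the diagram backwards. The only slight misstatement is attributing the bound $d\in\{1,4\}$ to the final contraction step rather than to Proposition~\ref{twocases}, but this does not affect the argument.
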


\


\def\cprime{$'$} \def\mathbb#1{\mathbf#1}

\Addresses

\end{document}